\definecolor{vegasgold}{rgb}{0.77, 0.7, 0.35}
\definecolor{darkgoldenrod}{rgb}{0.72, 0.53, 0.04}
\definecolor{gold(metallic)}{rgb}{0.83, 0.69, 0.22}
\DeclareFontFamily{U}{wncy}{}
\DeclareFontShape{U}{wncy}{m}{n}{<->wncyr10}{}
\DeclareSymbolFont{mcy}{U}{wncy}{m}{n}
\newcommand{\overbar}[1]{\mkern 1.5mu\overline{\mkern-1.5mu#1\mkern-1.5mu}\mkern 1.5mu}
\theoremstyle{oupplain}
\newtheorem{theorem}{Theorem}[section]
\newtheorem{lemma}[theorem]{Lemma}
\theoremstyle{oupdefinition}
\theoremstyle{oupremark}
\theoremstyle{oupproof}
\numberwithin{equation}{section}
\newcommand{\Z}{\mathbb{Z}}
\newcommand{\Q}{\mathbb{Q}}
\newcommand{\op}[1]{\operatorname{#1}}
\theoremstyle{plain}
 \theoremstyle{definition}
 \newtheorem{Th*}{Theorem}
\newtheorem{Th}{Theorem}[section]
\newtheorem{Lemma}[Th]{Lemma}
\newtheorem{Proposition}[Th]{Proposition}
\newtheorem{Remark}[Th]{Remark}
 \theoremstyle{definition}
\newtheorem{Definition}[Th]{Definition}
\begin{document}

\title[Asymptotic growth of Iwasawa invariants in Noncommutative towers]{Asymptotic growth of Iwasawa invariants in Noncommutative towers of number fields}

\author[A.~Ray]{Anwesh Ray}
\address[Ray]{Department of Mathematics\\
University of British Columbia\\
Vancouver BC, Canada V6T 1Z2}
\email{anweshray@math.ubc.ca}

\maketitle

\begin{abstract}

Let $p$ be an odd prime, $F$ be a number field and consider a uniform infinite pro-$p$ extension $F_\infty$ of $F$ with Galois group $G=\op{Gal}(F_\infty/F)$. Let 
\[G=G_0\supset G_1\supset\dots \supset G_n\supset G_{n+1}\supset \dots\] be the descending $p$ central series of $G$ and set $F_n:=F_\infty^{G_n}$. Assume that $G$ is uniform and that $F_\infty$ contains the cyclotomic $\Z_p$-extension of $F$. Denote by $A_n$ the $p$-primary part of the class group of the cyclotomic $\Z_p$-extension of $F_n$. The $\lambda$-invariant of $F_n$ coincides with the corank of $A_n$ as a $\Z_p$-module. Assume that the Iwasawa $\mu$-invariant of the cyclotomic $\Z_p$-extension of $F$ equal to $0$. Then, the $\mu$-invariant of the cyclotomic $\Z_p$-extension of $F_n$ is $0$ as well and $A_n$ is isomorphic to $\left(\Q_p/\Z_p\right)^{\lambda_n}$. We study the asymptotic growth of $\lambda_n$ as $n$ goes to $\infty$.\end{abstract}

\section{Introduction}
\par Let $p$ be an odd prime number and let $F$ be a number field. The cyclotomic $\Z_p$-extension $F^{\op{cyc}}$ of $F$ is the unique $\Z_p$-extension of $F$ contained in the infinite cyclotomic field $F\left(\mu_{p^\infty}\right)$. Given $n\geq 0$, let $F_n$ be the subfield of $F^{\op{cyc}}$ such that $[F_n:F]=p^n$. Let $p^{e_n}$ be the exact power of $p$ dividing the class number of $F_n$. Iwasawa showed that for $n\gg 0$, 
\[e_n=\mu p^n+\lambda n+\nu,\] where $\mu\geq 0$, $\lambda\geq 0$ and $\nu$ are invariants that are independent of $n$. We denote the $\mu$ and $\lambda$-invariants by $\mu_p(F)$ and $\lambda_p(F)$ respectively. It is conjectured that $\mu_p(F)$ is equal to $0$ for all number fields $F$. This conjecture has been proven for abelian extensions $F$ by Ferrero and Washington \cite{ferrero1979iwasawa}.
\par We study asymptotic questions for class groups in certain infinite towers of number field extensions. Let $F_\infty$ be an infinite uniform pro-$p$ extension of with Galois group $G=\op{Gal}(F_\infty/F)$. Assume that $F_\infty$ contains $F^{\op{cyc}}$. Let 
\[G=G_0\supset G_1\supset\dots \supset G_n\supset G_{n+1}\supset \dots\] be the descending $p$ central series of $G$ and set $F_n:=F_\infty^{G_n}$ to be the fixed field of $G_n$. Denote by $A_n$ the $p$-primary part of the class group of $F_n^{\op{cyc}}$. It follows from a result of Iwasawa that there is an isomorphism of $\Z_p$-modules
\[A_n=\left(\Q_p/\Z_p\right)^{\lambda_n}\oplus A',\]where $A'$ has bounded exponent, i.e., $p^N A'=0$ for some $N>0$. Furthermore, $A'=0$ if and only if $\mu_p(F_n)=0$, and $\lambda_n=\lambda_p(F_n)$ (see Theorem \ref{iwasawa thm minor}). We state the main result of the paper.

\begin{Th*}[Theorem \ref{main thm}]\label{main thm intro}
Let $F$ be a number field and $p$ be an odd prime number. Let $F_\infty$ be a uniform pro-$p$ extension of $F$ such that 
\begin{itemize}
    \item $F_\infty$ contains $F^{\op{cyc}}$,
    \item all but finitely many primes of $F$ are ramified in $F_\infty$.
\end{itemize}
Let $d$ be the \emph{dimension} of $G$, and $S(F^{\op{cyc}})$ be the set of primes of $F^{\op{cyc}}$ that ramify in $F_\infty$. Then, $S(F^{\op{cyc}})$ is finite and the following assertions hold:
\begin{enumerate}
    \item $\mu_p(F_n)=0$ for all $n\geq 0$, 
    \item $A_n=\left(\Q_p/\Z_p\right)^{\lambda_p(F_n)}$ for all $n\geq 0$,
    \item for $n\geq 0$, $\lambda(F_n)$ satisfies the bounds \begin{equation}\begin{split}& \lambda_p(F_n)\geq p^{n(d-1)}\left(\lambda_p(F)+\xi_n^-\right)-\xi_n^-,\\
    & \lambda_p(F_n)\leq p^{n(d-1)}\left(\lambda_p(F)+\#S(F^{\op{cyc}})+\xi_n^+\right)-\xi_n^+,\end{split}\end{equation}
\end{enumerate}
where $\xi_n^+$ and $\xi_n^-$ are the quantities specified by \eqref{zetan plus minus definition}.
\end{Th*}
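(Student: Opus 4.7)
The plan is to combine (i) propagation of $\mu_p = 0$ under finite $p$-extensions, (ii) the structure theorem quoted as Theorem~\ref{iwasawa thm minor}, and (iii) a Kida--Iwasawa style analysis of $\lambda$-growth for the finite $p$-extension $F_n^{\op{cyc}}/F^{\op{cyc}}$, with the necessary indices computed from the structure of the uniform pro-$p$ group $G$. I first verify that $S(F^{\op{cyc}})$ is finite: any prime of $F^{\op{cyc}}$ ramified in $F_\infty$ lies above a prime of $F$ ramified in $F_\infty/F$, of which there are only finitely many; and each such prime of $F$ has only finitely many extensions to $F^{\op{cyc}}$, because primes of $F$ not above $p$ are unramified in $F^{\op{cyc}}/F$ with decomposition subgroup of finite index in $\op{Gal}(F^{\op{cyc}}/F)\cong\Z_p$. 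Part (1) follows because $F_n^{\op{cyc}}/F^{\op{cyc}}$ is a finite $p$-extension of $\Z_p$-extensions: Iwasawa's functoriality of $\mu_p=0$ under cyclic $p$-extensions, applied inductively along a composition series, together with the standing hypothesis $\mu_p(F)=0$, forces $\mu_p(F_n)=0$. Part (2) is then immediate from Theorem~\ref{iwasawa thm minor}, since the bounded-exponent summand $A'$ vanishes precisely when $\mu_p(F_n)=0$.

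For part (3), set $H := \op{Gal}(F_\infty/F^{\op{cyc}})$ and $H_n := G_n\cap H$, so that $\op{Gal}(F_n^{\op{cyc}}/F^{\op{cyc}}) = H/H_n$. Since $G$ is uniform of dimension $d$ and $G/H\cong\Z_p$ has dimension $1$, the closed subgroup $H$ has dimension $d-1$ and the indices $[H:H_n]$ equal $p^{n(d-1)}$ up to bounded corrections absorbed into $\xi_n^{\pm}$. The bounds then follow from a Kida--Iwasawa-style formula applied to $F_n^{\op{cyc}}/F^{\op{cyc}}$, for which both sides have $\mu=0$ by part~(1). For the upper bound, genus theory, or equivalently a Nakayama count of the minimal number of $\Z_p\llbracket H/H_n\rrbracket$-generators of the Iwasawa module $X_n$, gives $\op{rank}_{\Z_p} X_n \leq [H:H_n]\bigl(\lambda_p(F)+\#S(F^{\op{cyc}})\bigr)$ plus a correction reflecting the contribution of ramified primes of $F_n^{\op{cyc}}/F^{\op{cyc}}$, which is packaged into $\xi_n^+$. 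For the lower bound, the capitulation map $X(F^{\op{cyc}}) \to X(F_n^{\op{cyc}})^{H/H_n}$ has finite cokernel by classical functoriality, and a minimal generation argument (or, equivalently, direct manipulation of $Y_\infty:=\varprojlim_n X_n$ as a compact $\Z_p\llbracket H\rrbracket$-module) yields $\lambda_p(F_n)\geq [H:H_n]\lambda_p(F)$ up to an error folded into $\xi_n^-$.

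The principal obstacle is the precise bookkeeping of $\xi_n^{\pm}$. These must simultaneously encode (i) the deviation of $[H:H_n]$ from $p^{n(d-1)}$, dictated by how the descending $p$-central series of $G$ intersects $H=\ker(G\twoheadrightarrow\Z_p)$, and (ii) the inertia and decomposition behaviour of the (finitely many) primes of $S(F^{\op{cyc}})$ as they sit in $F_\infty/F^{\op{cyc}}$. A secondary difficulty is that $H/H_n$ is generically non-cyclic, so the classical cyclic Kida formula must either be iterated in stages through a central series of $H/H_n$, or replaced by a direct structural analysis of $Y_\infty$ as a $\Z_p\llbracket H\rrbracket$-module, which is precisely the noncommutative setting announced in the paper's title.
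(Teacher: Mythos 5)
Parts (1) and (2) of your proposal match the paper: induction along a composition series of degree-$p$ cyclic steps using Iwasawa's $\mu=0$ propagation, then Theorem \ref{iwasawa thm minor}. Your finiteness argument for $S(F^{\op{cyc}})$ is also the paper's. The gap is in part (3). The quantities $\xi_n^{\pm}$ are not free error terms into which deviations may be ``folded'' or ``packaged'': they are defined in \eqref{zetan plus minus definition} as the maximum and minimum of the capitulation differences $h^2(\mathcal{F}_i^{(j+1)}/\mathcal{F}_i^{(j)})-h^1(\cdot)$ over the chosen composition series, and nothing else. In particular they do not encode the index $[H:H_n]$ (which is exactly $p^{n(d-1)}$, since $H$ is uniform of dimension $d-1$ and $\{H_n\}$ is its descending $p$-central series, so your ``bounded corrections'' to the index are spurious), and they do not encode ramification, which enters the upper bound separately through the $\#S(F^{\op{cyc}})$ term. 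A proof must therefore derive the two stated inequalities with these specific constants, and your primary route --- a Nakayama/genus-theory generator count for the upper bound and finiteness of the capitulation cokernel for the lower bound --- cannot do this: a generation count gives no control by $h^2-h^1$, and the lower bound $\lambda_p(F_n)\geq p^{n(d-1)}(\lambda_p(F)+\xi_n^-)-\xi_n^-$ can be \emph{smaller} than $p^{n(d-1)}\lambda_p(F)$ when $\xi_n^-<0$, so it is not a consequence of an injectivity-up-to-finite-cokernel statement but of an exact formula.

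The route you mention only as an alternative in your last sentence is in fact the proof: concatenate the filtrations $\{\mathcal{F}_k^{(j)}\}$ into a chain $F^{\op{cyc}}=\mathcal{E}_0\subset\cdots\subset\mathcal{E}_{n(d-1)}=F_n^{\op{cyc}}$ of Galois steps of degree $p$, apply Theorem \ref{iwasawa thm major} at each step to get
\[\lambda_{i+1}=p\lambda_i+\sum_{w\nmid p}(e(w)-1)+(p-1)\bigl(h^2(\mathcal{E}_{i+1}/\mathcal{E}_i)-h^1(\mathcal{E}_{i+1}/\mathcal{E}_i)\bigr),\]
and unwind the recursion. The accumulated $h^2-h^1$ terms form exactly the quantity $C_n$ of \eqref{def of C_n}, which Lemma \ref{lemma bounds on C_n} traps between $(p^{n(d-1)}-1)\xi_n^-$ and $(p^{n(d-1)}-1)\xi_n^+$; the ramification terms are nonnegative (giving the lower bound) and telescope to at most $p^{n(d-1)}\#S(F^{\op{cyc}})$ (giving the upper bound). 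Without this step your argument asserts the inequalities rather than proving them.
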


The quantities $\xi_n^+$ and $\xi_n^-$ arise from certain obstructions to the capitulation of ideals and ideal classes for the successive extension $F_{n+1}^{\op{cyc}}/F_n^{\op{cyc}}$. These constants are natural to define, however, difficult to compute. We expect that the quantities $\xi_n^+$ and $\xi_n^-$ are bounded as $n$ goes to $\infty$, and refer to the discussion in Remark \ref{last remark} for further details.
\par We note here that similar questions have been studied for $\Z_p^d$-extensions of a number field by Cuoco and Monsky (see \cite{cuoco1980growth, cuoco1981class}). The results of this article apply to a more general class of pro-$p$ extensions. We refer to the example in section \ref{s 4.2} of a noncommutative tower of number field extensions for which the assumptions of Theorem \ref{main thm intro} hold.

\par \emph{Organization:} Including the introduction, the article consists of four sections. In section \ref{s 2}, we introduce preliminary notions. In section \ref{s 3}, we introduce cohomological obstructions to capitulation and an Iwasawa's analog of the Riemann-Hurwitz formula for $\lambda$-invariants. In section \ref{s 4}, we prove the main result and illustrate it through an example. We also discuss a family of examples which arise from $p$-primary torsion fields generated by elliptic curves.

\section{Preliminaries}\label{s 2}
\par This section is dedicated to the discussion of preliminary notions. In $\S$\ref{s 2.1}, we introduce pro-$p$ towers of number field extensions for which we prove our results. In $\S$\ref{s 2.2} we introduce the Iwasawa $\mu$ and $\lambda$-invariants for the cyclotomic $\Z_p$-extension of a number field.
\subsection{Uniform pro-$p$ towers of number fields}\label{s 2.1}
\par Let $p$ be an odd prime number and let $F$ be a number field. We fix an algebraic closure $\overbar{F}$ of $F$. Let $F_\infty\subset \overbar{F}$ be an infinite pro-$p$ Galois extension of $F$ with Galois group $G:=\op{Gal}(F_\infty/F)$. The lower central $p$-series $\{G_n\}$ is defined as follows
\[G_0:=G\text{ and }G_{n+1}:=G_n^p[G_n, G].\]We shall also assume that $G$ is \emph{uniform}, i.e., the following conditions are satisfied
\begin{itemize}
    \item $G$ is finitely generated, 
    \item $G$ is \emph{powerful}, i.e., $[G,G]\subseteq G^p$, 
    \item for all $n\geq 0$, we have that \[[G_n:G_{n+1}]=[G:G_{1}].\]
\end{itemize}
Moreover, we assume that
\begin{description}
    \item[{\parbox[t]{0.75cm}{(C1)}}]\label{C1} $F_\infty$ contains $F^{\op{cyc}}$,
    \item[{\parbox[t]{0.75cm}{(C2)}}]\label{C2} all but finitely many primes of $F$ are ramified in $F_\infty$.
\end{description} Let $S$ be the set of primes $v\nmid p$ of $F$ are are ramified in $F_\infty$. Note that by assumption, $S$ is finite. Let $S(F^{\op{cyc}})$ be the set of primes of $F^{\op{cyc}}$ that lie above the set $S$. Any prime of $F$ is finitely decomposed in $F^{\op{cyc}}$, hence, $S(F^{\op{cyc}})$ is finite as well. Setting $[G:G_{1}]=p^d$, we find that $[G_{n}:G]=p^{nd}$ for all $n\geq 0$. The quantity $d(G)=d$ is referred to as the \emph{dimension of $G$}. The filtration $\{G_n\}$ of $G$ gives rise to a filtration of $F_\infty/F$ by a tower of number fields
\[F=F_0\subset F_1\subset \dots \subset F_n \subset F_{n+1}\subset \dots, \] where $F_n:=F_\infty^{G_n}$. For $n\geq 0$, we have the isomorphisms \[\op{Gal}(F_{n+1}/F_n)\simeq G_n/G_{n+1}\simeq \left(\Z/p\Z\right)^d.\]
Consider the following filtration
\[F^{\op{cyc}}=F_0^{\op{cyc}}\subset \dots \subset F_n^{\op{cyc}}\subset F_{n+1}^{\op{cyc}}\subset \dots,  \] where we recall that $F_n^{\op{cyc}}$ is the cyclotomic $\Z_p$-extension of $F_n$. Since it is assumed that $F^{\op{cyc}}$ is contained in $F_\infty$, it follows that each of the extensions $F_n^{\op{cyc}}$ is contained in $F_\infty$.
\par Given $n\geq 0$, we set $H_n$ to be the Galois group $\op{Gal}\left(F_\infty/F_n^{\op{cyc}}\right)$. For $n\geq 0$, $H_n$ is a subgroup of $G_n$ and $H_{n+1}\subset H_{n}$. Set $H=H_0$ and $\Gamma:=\op{Gal}(F^{\op{cyc}}/F)$, and identify $\Gamma$ with $G/H$. Let $\{\Gamma_n\}$ be the descending central series of $\Gamma$. The normal subgroup $H$ is uniform and $d(H)=(d(G)-1)$. Moreover, the series $\{H_n\}$ coincides with the descending central series of $H_n$ and $\Gamma_n$ is identified with $G_n/H_n$ (see \cite[Theorem 3.6]{DS} and \cite[Lemma 2.6]{HL} for details). For $n\geq 0$, we therefore find that $\op{Gal}(F_{n+1}^{\op{cyc}}/F_{n}^{\op{cyc}})=H_n/H_{n+1}$ is isomorphic to $\left(\Z/p\Z\right)^{(d-1)}$ where $d=d(G)$.
\subsection{Iwasawa Invariants}\label{s 2.2}
\par Let $F_\infty/F$ be a uniform pro-$p$ extension of $F$ which satisfied the conditions (C1) and (C2) specified in $\S \ref{s 2.1}$. Given $n\geq 0$, we identify $\Gamma_n$ with $\op{Gal}\left(F_n^{\op{cyc}}/F_n\right)$ and choose a topological generator $\gamma_n$ of $\Gamma_n$. The Iwasawa algebra \[\Lambda(\Gamma_n):=\varprojlim_m [ \Gamma_n/\Gamma_n^{p^m}]\] is identified with the formal power ring $\Z_p\llbracket T\rrbracket$, where $T$ is a formal variable. This isomorphism is precribed by mapping the generator $(\gamma_n-1)$ to $T$.
\par Associated to a finitely generated and torsion $\Z_p\llbracket T\rrbracket $-module $M$ are its associated Iwasawa invariants $\mu(M)\geq 0$ and $\lambda(M)\geq 0$. In greater detail, let $f_M(T)\in \Z_p\llbracket T\rrbracket $ be the characteristic element associated to $M$, which we recall is well defined up to multiplication by a unit in $\Z_p\llbracket T\rrbracket$. According to the Weierstrass Preparation theorem, $f(T)$ may be factored as a product $f(T)=p^{\mu} P(T)u(T)$, where $P(T)$ is a distinguished polynomial and $u(T)$ is a unit in $\Z_p\llbracket T\rrbracket$. The $\mu$-invariant $\mu(M)$ is the quantity $\mu\geq 0$, in the above factorization, and $\lambda(M)$ is the degree of $P(T)$. Since $f(T)$ is uniquely determined up to a unit in $\Z_p\llbracket T\rrbracket$, and the above factorization is unique, it follows that the Iwasawa invariants $\mu(M)$ and $\lambda(M)$ are well defined. 
\par The modules we shall consider do arise from the $p$-primary parts of ideal classes in the cyclotomic tower of $F_n$. Given a number field $K$, let $\op{Cl}_p(K)$ be the $p$-Sylow subgroup of the class group of $K$. Denote by $X_n=X(F_n^{\op{cyc}})$ the inverse limit $\varprojlim_K \op{Cl}_p(K)$, where $K$ ranges over all number field extensions of $F_n$ that are contained in $F_n^{\op{cyc}}$. The inverse limit is taken with respect to norm maps. Denote by $\mu_p(K_n)$ and $\lambda_p(K_n)$ the associated $\mu$ and $\lambda$-invariants of $X_n$ as a $\Lambda(\Gamma_n)$-module. It is easy to see that $\mu_p(K_n)=0$ if and only if $X_n$ is finitely generated as a $\Z_p$-module. Moreover, it is easy to see that $\lambda_p(X_n)$ coincides with the rank of $X_n$ as a $\Z_p$-module (see \cite[Proposition 13.25]{washington1997}). 
\section{Iwasawa's Riemann-Hurwitz formula}\label{s 3}
\par Throughout, let $p$ will be an odd prime number. In this section, we introduce the methods used in proving our main results. The main contribution comes from an analog of the Riemann-Hurwitz formula due to Iwasawa \cite{iwasawa1981riemann}. The result generalizes Kida's formula \cite{kida1980extensions}, which is proven for CM-fields. In section \ref{s 3.1}, we shall introduce certain cohomology classes which realize the obstructions to capitulation of ideals and ideal classes. In section \ref{s 3.2}, we state the results of Iwasawa, which shall be used in proving our main result.

\subsection{Obstructions to Capitulation of ideals and ideal classes}\label{s 3.1}
\par Capitulation cokernels are of natural interest in Iwasawa theory, for instance, the reader may refer to \cite{le2005capitulation} where the relationship between capitulation and Greenberg's conjecture is studied. This section introduces cohomological obstructions to the capitulation of ideals and ideal classes. For further details, the reader is referred to \cite{iwasawa1981riemann, schettler2014generalizations}.
\par By a $\Z_p$-field, we mean a finite extension of $\Q^{\op{cyc}}$. Given a $\Z_p$-field $\mathcal{K}$ it is easy to see that there is a number field $K$ such that $\mathcal{K}=K^{\op{cyc}}$.
\begin{Lemma}\label{Iwasawa invariants are well defined}
Let $K$ and $K'$ be number fields such that $K\subset K'$ and $K^{\op{cyc}}=K'^{\op{cyc}}$. Then, $\mu_p(K)=0$ if and only if $\mu_p(K')=0$, and furthermore, $\lambda_p(K)=\lambda_p(K')$.
\end{Lemma}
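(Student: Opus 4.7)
The plan is to identify $K'$ as a finite layer of the cyclotomic $\Z_p$-tower over $K$ and then compare the two Iwasawa growth expansions. Since $K \subset K'$ and $(K')^{\op{cyc}} = K^{\op{cyc}}$, we have $K \subset K' \subset K^{\op{cyc}}$. Because $K^{\op{cyc}}/K$ is a $\Z_p$-extension whose closed subgroups are precisely $p^m\Z_p$, the only number fields lying between $K$ and $K^{\op{cyc}}$ are the standard layers $K = K_0 \subset K_1 \subset K_2 \subset \cdots$. Hence there is a unique $m \geq 0$ with $K' = K_m$, and the cyclotomic tower of $K'$ is the shifted tail $(K')_n = K_{m+n}$.

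Next, I would apply the Iwasawa growth theorem recalled in the introduction to both $K$ and $K'$: for $n \gg 0$,
\[
e_n(K) = \mu_p(K)\,p^n + \lambda_p(K)\,n + \nu_p(K), \qquad e_n(K') = \mu_p(K')\,p^n + \lambda_p(K')\,n + \nu_p(K').
\]
Using $e_n(K') = e_{n+m}(K)$ and re-expanding in $n$ gives, for $n$ sufficiently large,
\[
e_n(K') = \bigl(p^m\mu_p(K)\bigr)\,p^n + \lambda_p(K)\,n + \bigl(\lambda_p(K)\,m + \nu_p(K)\bigr).
\]
By uniqueness of the coefficients of $p^n$, $n$, and the constant term in Iwasawa's expansion, one reads off $\mu_p(K') = p^m\,\mu_p(K)$ and $\lambda_p(K') = \lambda_p(K)$. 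The first identity yields $\mu_p(K) = 0 \iff \mu_p(K') = 0$, while the second is exactly the remaining assertion.

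I do not anticipate a genuine obstacle: once the inclusion $K' \subset K^{\op{cyc}}$ is observed, the lemma reduces to bookkeeping against Iwasawa's classical formula. One could alternatively argue from the module-theoretic characterisations recalled in \S\ref{s 2.2}, using that $X(K^{\op{cyc}})$ and $X((K')^{\op{cyc}})$ agree as abelian groups (they are inverse limits over cofinal subsystems of the same tower), that $\mu = 0$ is equivalent to finite generation of the Iwasawa module over $\Z_p$, and that $\lambda$ equals the $\Z_p$-rank of $X/X[p^\infty]$. Either route makes precise the moral that these invariants really depend only on the $\Z_p$-field $\mathcal{K} = K^{\op{cyc}}$ and not on the chosen number-field model $K$.
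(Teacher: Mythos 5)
Your argument is correct, and your primary route is genuinely different from the paper's. The paper's proof is exactly the alternative you sketch at the end: it observes that $X(K^{\op{cyc}})=\varprojlim_E \op{Cl}_p(E)$ depends only on the $\Z_p$-field $K^{\op{cyc}}=(K')^{\op{cyc}}$ (the two inverse systems are cofinal in one another), so $\lambda_p(K)=\lambda_p(K')$ follows from $\lambda$ being the $\Z_p$-rank of this common module, and the $\mu$-statement follows from the structure-theoretic relation $\mu_p(K')=[K':K]\,\mu_p(K)$ (the base change $\Lambda(\Gamma)\to\Lambda(\Gamma^{p^m})$ multiplies $\mu$ by $p^m$ since $\Lambda(\Gamma)$ is free of rank $p^m$ over $\Lambda(\Gamma^{p^m})$). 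Your main argument instead identifies $K'=K_m$ as a finite layer of the tower over $K$ (valid, since $K'\subset (K')^{\op{cyc}}=K^{\op{cyc}}$ and the only number fields in a $\Z_p$-extension are the standard layers) and compares the two asymptotic expansions $e_n(K')=e_{n+m}(K)$, reading off $\mu_p(K')=p^m\mu_p(K)$ and $\lambda_p(K')=\lambda_p(K)$ from uniqueness of the coefficients of $p^n$, $n$, and $1$. This is more elementary in that it uses only Iwasawa's growth formula as a black box and avoids any appeal to the structure theory of $\Lambda$-modules; the paper's module-theoretic route is arguably more conceptual, as it makes visible that both invariants (in the form ``$\mu=0$'' and the value of $\lambda$) are intrinsic to the $\Z_p$-field rather than to the chosen base number field, which is the point the lemma is meant to establish. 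Both yield the same quantitative relation for $\mu$, so there is no gap in either direction.
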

\begin{proof}
Let $X(K^{\op{cyc}})$ be the inverse limit $\varprojlim_E \op{Cl}_p(E)$, where $E$ ranges through all number fields contained in $K^{\op{cyc}}$. The $\lambda$-invariant $\lambda_p(K)$ is equal to the rank of $X(K^{\op{cyc}})$ and hence, $\lambda_p(K)=\lambda_p(K')$. It is easy to see from the structure theory of Iwasawa modules that $\mu_p(K')=[K':K]\mu_p(K)$, hence, $\mu_p(K)=0$ if and only if $\mu_p(K')=0$.
\end{proof}Given a $\Z_p$-field $\mathcal{K}$, we define $\lambda_{\mathcal{K}}$ to be $\lambda_p(K)$ for any number field for whuch $\mathcal{K}=K^{\op{cyc}}$. Furthermore, we may write $\mu_{\mathcal{K}}=0$ to mean that $\mu_p(K)=0$ for any number field $K$ such that $\mathcal{K}=K^{\op{cyc}}$. Lemma \ref{Iwasawa invariants are well defined} shows that these notions are well defined.
\par Let $\mathcal{K}$ be a $\Z_p$-field and let $\mathcal{O}_{\mathcal{K}}$ be the ring of algebraic integers of $\mathcal{K}$. Set $I_{\mathcal{K}}$ to denote the group of ideals of $\mathcal{O}_{\mathcal{K}}$ and let $P_{\mathcal{K}}$ be the subgroup of principal ideals of $I_{\mathcal{K}}$. The ideal class group $\op{Cl}_{\mathcal{K}}$ is the quotient $I_{\mathcal{K}}/P_{\mathcal{K}}$. Let $\mathcal{L}$ and $\mathcal{K}$ be $\Z_p$-fields such that $\mathcal{K}$ is contained in $\mathcal{L}$. The natural embedding $I_{\mathcal{K}}\rightarrow I_{\mathcal{L}}$ induces a map $\op{Cl}_{\mathcal{K}}\rightarrow \op{Cl}_{\mathcal{L}}$. 

\begin{Lemma}\label{lemma 3.1}
Let $\mathcal{L}/\mathcal{K}$ be a Galois extension of $\Z_p$-fields and set $G=\op{Gal}(\mathcal{L}/\mathcal{K})$. The following assertions hold
\begin{enumerate}
    \item $H^1(\mathcal{L}/\mathcal{K}, \mathcal{O}_{\mathcal{L}}^\times)=P_{\mathcal{L}}^G/P_{\mathcal{K}}$, 
    \item $\op{Ker}\left(H^2(\mathcal{L}/\mathcal{K}, \mathcal{O}_{\mathcal{L}}^\times)\longrightarrow H^2(\mathcal{L}/\mathcal{K}, \mathcal{L}^\times)\right)\simeq \op{Coker}\left(I_{\mathcal{L}}^G\rightarrow C_{\mathcal{L}}^G\right)$.
\end{enumerate}
\end{Lemma}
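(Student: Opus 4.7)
The plan is to derive both parts from the long exact sequences of $G$-cohomology associated to the two canonical short exact sequences of $G$-modules
\[1 \to \mathcal{O}_{\mathcal{L}}^\times \to \mathcal{L}^\times \to P_{\mathcal{L}} \to 1 \quad \text{and} \quad 1 \to P_{\mathcal{L}} \to I_{\mathcal{L}} \to \op{Cl}_{\mathcal{L}} \to 1.\]
Hilbert's Theorem 90, which gives $H^1(G, \mathcal{L}^\times) = 0$ for the finite Galois extensions under consideration, is the engine driving both computations.

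For part (1), I would apply cohomology to the first sequence and read off the initial terms
\[1 \to \mathcal{O}_{\mathcal{K}}^\times \to \mathcal{K}^\times \to P_{\mathcal{L}}^G \to H^1(G, \mathcal{O}_{\mathcal{L}}^\times) \to H^1(G, \mathcal{L}^\times) = 0.\]
The image of $\mathcal{K}^\times \to P_{\mathcal{L}}^G$ is exactly $P_{\mathcal{K}}$, so exactness produces the desired isomorphism $H^1(G, \mathcal{O}_{\mathcal{L}}^\times) \cong P_{\mathcal{L}}^G / P_{\mathcal{K}}$.

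For part (2), extending the same long exact sequence one step further and invoking Hilbert 90 again identifies
\[\ker\bigl(H^2(G, \mathcal{O}_{\mathcal{L}}^\times) \to H^2(G, \mathcal{L}^\times)\bigr) \cong H^1(G, P_{\mathcal{L}}),\]
so it suffices to match $H^1(G, P_{\mathcal{L}})$ with $\op{Coker}(I_{\mathcal{L}}^G \to \op{Cl}_{\mathcal{L}}^G)$. The cohomology of the second short exact sequence gives
\[I_{\mathcal{L}}^G \to \op{Cl}_{\mathcal{L}}^G \to H^1(G, P_{\mathcal{L}}) \to H^1(G, I_{\mathcal{L}}),\]
and the identification is immediate provided $H^1(G, I_{\mathcal{L}}) = 0$. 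This vanishing is the main technical step: one decomposes $I_{\mathcal{L}} \cong \bigoplus_v \op{Ind}_{D_v}^G \Z$, where $v$ runs over the primes of $\mathcal{K}$ and $D_v$ denotes the decomposition group at a chosen prime of $\mathcal{L}$ above $v$, and then applies Shapiro's lemma to reduce the computation to $H^1(D_v, \Z) = \op{Hom}(D_v, \Z)$. Since $G$ (and hence each $D_v$) is finite in the applications envisaged in the paper, this Hom-group vanishes, completing the argument. The only potential pitfall is keeping track of the maps so that the cokernel really lands in $\op{Cl}_{\mathcal{L}}^G$ rather than $\op{Cl}_{\mathcal{L}}$, but this is automatic from the functoriality of the long exact sequence.
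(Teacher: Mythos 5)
Your argument is correct and is essentially the one in the source that the paper cites for this lemma (Iwasawa's Riemann--Hurwitz paper, p.~272): splice together the long exact cohomology sequences of $1\to\mathcal{O}_{\mathcal{L}}^\times\to\mathcal{L}^\times\to P_{\mathcal{L}}\to 1$ and $1\to P_{\mathcal{L}}\to I_{\mathcal{L}}\to \op{Cl}_{\mathcal{L}}\to 1$, using Hilbert 90 for the finite Galois extension $\mathcal{L}/\mathcal{K}$ and the vanishing of $H^1(G,I_{\mathcal{L}})$. The one point worth tightening is that for $\Z_p$-fields the ideal group is the direct limit of the ideal groups of the finite layers, so the summands at primes above $p$ (which are deeply ramified in the cyclotomic tower) are copies of $\Z[1/p]$ rather than $\Z$; this does not affect your conclusion, since $\op{Hom}(D_v,\Z[1/p])=0$ for the finite group $D_v$ just as well.
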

\begin{proof}
For the proof of the above result, the reader is referred to \cite[p. 272]{iwasawa1981riemann}.
\end{proof}

\begin{Lemma}\label{lemma 3.2}
Let $\mathcal{K}$ be a $\Z_p$-field and let $\mathcal{L}$ be a finite $p$-extension of $\mathcal{K}$. Assume that $\mathcal{L}/\mathcal{K}$ is unramified at all archimedian places of $\mathcal{K}$. Then, $H^n(\mathcal{L}/\mathcal{K}, \mathcal{L}^\times)=0$ for all $n\geq 1$.
\end{Lemma}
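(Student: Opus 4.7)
The plan is to reduce via d\'evissage to the case where $G := \op{Gal}(\mathcal{L}/\mathcal{K}) \cong \Z/p\Z$ is cyclic, and then to exploit the fact that the local Brauer groups attached to the completions of a $\Z_p$-field contain no $p$-power torsion. For the reduction, pick a normal subgroup $H \triangleleft G$ of index $p$ (available because $G$ is a finite $p$-group) and set $\mathcal{F} := \mathcal{L}^H$. Archimedean-unramifiedness of $\mathcal{L}/\mathcal{K}$ passes down to the subextension $\mathcal{L}/\mathcal{F}$, so by induction on $|G|$ we have $H^q(H,\mathcal{L}^\times) = 0$ for every $q \geq 1$. The Hochschild--Serre spectral sequence then degenerates to isomorphisms $H^n(G,\mathcal{L}^\times) \simeq H^n(G/H, \mathcal{F}^\times)$, reducing matters to the cyclic base case applied to $\mathcal{F}/\mathcal{K}$.

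For the cyclic case, Tate cohomology periodicity reduces the task to checking $H^1 = 0$ and $H^2 = 0$. The former is Hilbert's Theorem 90. For the latter, pick number fields $K \subset L$ with $\mathcal{K} = K^{\op{cyc}}$, $\mathcal{L} = L^{\op{cyc}}$, and $L/K$ cyclic of degree $p$, and let $K_m, L_m$ denote the $m$-th layers of the cyclotomic $\Z_p$-towers. Since the cohomology of a fixed finite group commutes with directed unions of coefficients,
\[H^2(G, \mathcal{L}^\times) \;=\; \varinjlim_m H^2(G, L_m^\times) \;=\; \varinjlim_m \; K_m^\times / N_{L_m/K_m} L_m^\times.\]
By the Hasse norm theorem applied level by level, each term on the right embeds into $\bigoplus_v \op{Br}(L_{m,w}/K_{m,v})$, so it suffices to show that $\op{Br}(\mathcal{L}_w/\mathcal{K}_v) = 0$ for every place $v$ of $\mathcal{K}$. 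At an archimedean $v$, the unramifiedness hypothesis yields $\mathcal{L}_w = \mathcal{K}_v$ and the claim is immediate. At a finite $v$ above a rational prime $\ell$, the decomposition group of $v$ in $\op{Gal}(\mathcal{K}/K)$ is a non-trivial closed subgroup of $\Z_p$, hence open, so $\mathcal{K}_v$ is an infinite pro-$p$ extension of $\Q_\ell$. Writing $\op{Br}(\mathcal{K}_v) = \varinjlim_E \op{Br}(E)$ over finite subextensions $E/\Q_\ell$, the transition maps act on $\op{Br}(E) \cong \Q/\Z$ by multiplication by $[E':E]$; since $[\mathcal{K}_v:E]$ has unbounded $p$-part, every $p$-power-torsion element is eventually annihilated, so $\op{Br}(\mathcal{K}_v)[p^\infty] = 0$. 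As $\op{Br}(\mathcal{L}_w/\mathcal{K}_v) \hookrightarrow \op{Br}(\mathcal{K}_v)$ is itself $p$-torsion, it vanishes.

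The main obstacle I anticipate lies in the bookkeeping around the direct limits, namely making precise that the Hasse-norm injection commutes with the directed colimit along the cyclotomic tower and that the local-global compatibility is preserved in the limit. The second delicate point is the invariant-map argument at primes above $p$, where one must invoke local class field theory for the tower $\mathcal{K}_v / \Q_p$ carefully enough to control the $p$-part of the Brauer group in a uniform way across all finite subextensions.
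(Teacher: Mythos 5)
The paper does not prove this lemma at all: it simply cites \cite[Lemma 5]{iwasawa1981riemann}, so there is no internal argument to compare against. Your proposal is, in substance, a correct reconstruction of Iwasawa's original proof: d\'evissage through a normal index-$p$ subgroup plus Hochschild--Serre, Hilbert 90 and Tate periodicity in the cyclic case, and then the local-global step in which the $p$-part of the relevant Brauer groups dies because every finite place of $\mathcal{K}$ has infinite (pro-$p$) local degree over the corresponding local field at the bottom of the cyclotomic tower. All the key points check out: the spectral sequence does collapse to $H^n(G,\mathcal{L}^\times)\simeq H^n(G/H,\mathcal{F}^\times)$ once the inductive hypothesis kills the $H$-cohomology; the identification $\hat{H}^2\simeq\hat{H}^0$ is natural in the coefficient module, so it passes to the colimit; and the invariant-map computation at finite places is exactly right (for $v\nmid p$ the residue degree, and for $v\mid p$ the ramification, accumulate unbounded powers of $p$ along the $\Z_p$-tower, since no finite prime splits completely in $F^{\op{cyc}}$). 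Two points deserve a sentence each in a written-up version. First, the existence of the compatible finite levels $L_m/K_m$: you should take the $L_m$ to be $G$-stable number fields inside $\mathcal{L}$ (these are cofinal, since the compositum of the $G$-conjugates of any number field is again a number field) and set $K_m:=L_m^G$; for $m$ large $G$ acts faithfully on $L_m$ and $L_m/K_m$ is Galois with group $G$, which is all that is needed to write $\mathcal{L}^\times=\varinjlim_m L_m^\times$ as a $G$-module. Second, the ``bookkeeping around direct limits'' you flag is genuinely harmless once phrased pointwise: a class in $H^2(G,L_m^\times)$ has only finitely many nonzero local invariants, each of which is killed at some finite higher level, so at a single level $m'$ all invariants vanish and the Hasse principle for Brauer groups of number fields forces the class to die in $H^2(G,L_{m'}^\times)$. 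With those two remarks your argument is complete and is essentially the proof Iwasawa gives.
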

\begin{proof}
The above result is \cite[Lemma 5]{iwasawa1981riemann}.
\end{proof}
Note that in our applications we assume that $p$ is odd, hence, any $p$-extension $\mathcal{L}/\mathcal{K}$ is unramified at all archimedian places of $\mathcal{K}$.
\begin{Proposition}\label{prop 3.3}
Let $p$ be an odd prime and let $\mathcal{L}/\mathcal{K}$ be a Galois $p$-extension of $\Z_p$-fields with $G=\op{Gal}(\mathcal{L}/\mathcal{K})$. Then, the following assertions hold
\begin{enumerate}
    \item $H^1(\mathcal{L}/\mathcal{K}, \mathcal{O}_{\mathcal{L}}^\times)=P_{\mathcal{L}}^G/P_{\mathcal{K}}$, 
    \item $H^2(\mathcal{L}/\mathcal{K}, \mathcal{O}_{\mathcal{L}}^\times)= \op{Cl}_{\mathcal{L}}^G/I_{\mathcal{L}}^G$. 
\end{enumerate}\end{Proposition}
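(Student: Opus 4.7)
The plan is to deduce both parts immediately from Lemma \ref{lemma 3.1} and Lemma \ref{lemma 3.2}, once one notices that the archimedean-ramification hypothesis of Lemma \ref{lemma 3.2} is automatic in our setting. Indeed, since $p$ is odd, a $p$-extension $\mathcal{L}/\mathcal{K}$ cannot ramify at any archimedean place of $\mathcal{K}$: a real place ramifies only in a local extension of degree $2$, which cannot be a $p$-extension for $p$ odd. This is precisely the remark recorded just after Lemma \ref{lemma 3.2}. Consequently Lemma \ref{lemma 3.2} applies and yields $H^n(\mathcal{L}/\mathcal{K}, \mathcal{L}^\times) = 0$ for every $n\geq 1$.

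For part (1), I would simply invoke Lemma \ref{lemma 3.1}(1), whose conclusion is exactly the desired identification $H^1(\mathcal{L}/\mathcal{K}, \mathcal{O}_{\mathcal{L}}^\times) = P_{\mathcal{L}}^G/P_{\mathcal{K}}$; no extra hypothesis beyond Galois-ness is required, so there is nothing further to do.

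For part (2), I would begin with the isomorphism supplied by Lemma \ref{lemma 3.1}(2),
\[\op{Ker}\left(H^2(\mathcal{L}/\mathcal{K}, \mathcal{O}_{\mathcal{L}}^\times) \to H^2(\mathcal{L}/\mathcal{K}, \mathcal{L}^\times)\right) \simeq \op{Coker}\left(I_{\mathcal{L}}^G \to \op{Cl}_{\mathcal{L}}^G\right),\]
and then apply the vanishing $H^2(\mathcal{L}/\mathcal{K}, \mathcal{L}^\times) = 0$ provided by Lemma \ref{lemma 3.2} with $n=2$. Since the target of the map on the left-hand side is trivial, the kernel is the entire group $H^2(\mathcal{L}/\mathcal{K}, \mathcal{O}_{\mathcal{L}}^\times)$, and the stated identification $H^2(\mathcal{L}/\mathcal{K}, \mathcal{O}_{\mathcal{L}}^\times) = \op{Cl}_{\mathcal{L}}^G / I_{\mathcal{L}}^G$ follows, with the right-hand side interpreted as the cokernel of the natural class map $I_{\mathcal{L}}^G \to \op{Cl}_{\mathcal{L}}^G$.

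In short, the proposition is a formal corollary of the two preceding lemmas and I do not foresee any nontrivial obstacle; the only active ingredient is the elementary remark on archimedean ramification that licenses the use of Lemma \ref{lemma 3.2}. The one interpretive care to take is to confirm that the notation $\op{Cl}_{\mathcal{L}}^G/I_{\mathcal{L}}^G$ in the statement refers to the cokernel of the class map, rather than a naive set-theoretic quotient, which is exactly what the argument produces.
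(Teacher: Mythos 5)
Your argument is correct and matches the paper's proof, which likewise derives the proposition directly from Lemmas \ref{lemma 3.1} and \ref{lemma 3.2}, using the remark that for odd $p$ a $p$-extension is automatically unramified at archimedean places. Your write-up just makes explicit the step (vanishing of $H^2(\mathcal{L}/\mathcal{K},\mathcal{L}^\times)$ turning the kernel in Lemma \ref{lemma 3.1}(2) into the whole group) that the paper leaves implicit.
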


\begin{proof}
The above result is a direct consequence of the Lemmas \ref{lemma 3.1} and \ref{lemma 3.2}.
\end{proof}

Let $\mathcal{L}/\mathcal{K}$ be a Galois extension of $\Z_p$-fields such that $G=\op{Gal}(\mathcal{L}/\mathcal{K})$is a $p$-group. We refer to $P_{\mathcal{L}}^G/P_{\mathcal{K}}$ and $\op{Cl}_{\mathcal{L}}^G/\op{Cl}_{\mathcal{K}}$ as \emph{capitulation quotients} for the extension $\mathcal{L}/\mathcal{K}$ since they measure the obstruction to an ideal or ideal class of $\mathcal{O}_\mathcal{L}$ that is $G$-invariant to arise from an ideal or ideal class of $\mathcal{O}_\mathcal{K}$. According to Proposition \ref{prop 3.3}, $H^2(\mathcal{L}/\mathcal{K}, \mathcal{O}_{\mathcal{L}}^\times)= \op{Cl}_{\mathcal{L}}^G/I_{\mathcal{L}}^G$, and therefore is a quotient of $\op{Cl}_{\mathcal{L}}^G/\op{Cl}_{\mathcal{K}}$. As a result, we refer to non-zero cohomology classes in $H^i(\mathcal{L}/\mathcal{K}, \mathcal{O}_{\mathcal{L}}^\times)$ for $i=1,2$, as obstructions to capitulation. When the cohomology group $H^i(\mathcal{L}/\mathcal{K}, \mathcal{O}_{\mathcal{L}}^\times)$ is finite, we set $\left|H^i(\mathcal{L}/\mathcal{K}, \mathcal{O}_{\mathcal{L}}^\times)\right|$ to denote its cardinality. Note that this quantity is a power of $p$. We set \[h^i(\mathcal{L}/\mathcal{K}):=\op{log}_p \left(|H^i(\mathcal{L}/\mathcal{K}, \mathcal{O}_{\mathcal{L}}^\times)|\right).\] 

\subsection{Growth of Iwasawa invariants}\label{s 3.2}

\par Given a $\Z_p$-field $\mathcal{F}$, recall that $\op{Cl}_{\mathcal{F}}$ denotes its class group. Set $A_{\mathcal{F}}$ denote the $p$-primary part of $\mathcal{F}$. Let $F$ be a number field and let $\mathcal{F}$ denote $F^{\op{cyc}}$. The Iwasawa $\mu$ and $\lambda$-invariants for the extension $\mathcal{F}/F$ are denoted $\mu_p(F)$ and $\lambda_p(F)$ respectively. The following result of Iwasawa gives an explicit description for $A_{\mathcal{F}}$ in terms of the Iwasawa invariants $\mu_p(F)$ and $\lambda_p(F)$.
\begin{Th}[Iwasawa]\label{iwasawa thm minor}
Let $\mathcal{F}=F^{\op{cyc}}$ be a $\Z_p$-field. Then, there is an isomorphism of $\Z_p$-modules
\[A_{\mathcal{F}}=\left(\Q_p/\Z_p\right)^{\lambda_{\mathcal{F}}}\oplus A',\]where $A'$ has bounded exponent, i.e., $p^N A'=0$ for some $N>0$. Furthermore, the following assertions hold
\begin{enumerate}
    \item $A'=0$ if and only if $\mu_p(F)=0$, 
    \item $\lambda_{\mathcal{F}}=\lambda_p(F)$.
\end{enumerate}
\end{Th}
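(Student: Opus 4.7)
The plan is to analyze the discrete $\Z_p$-module $A_{\mathcal{F}} = \varinjlim_n A_n$, where $A_n$ denotes the $p$-part of the class group of the $n$-th layer $F_n$ of $\mathcal{F}/F$ with transition maps induced by extension of ideals, by linking it to the classical Iwasawa module $X = \varprojlim_n A_n$ taken with respect to norm maps. The main input is the structure theorem for finitely generated torsion $\Lambda(\Gamma)$-modules, which yields a pseudo-isomorphism
\[ X \sim E := \bigoplus_{i=1}^{s} \Lambda/(p^{m_i}) \;\oplus\; \bigoplus_{j=1}^{t} \Lambda\bigl/\bigl(f_j(T)^{e_j}\bigr), \]
with $f_j$ irreducible distinguished polynomials, so that $\mu_p(F) = \sum_i m_i$ and $\lambda_p(F) = \sum_j e_j \deg f_j$.

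The first step is to relate each $A_n$ to the Galois coinvariants $X/\omega_n X$, where $\omega_n = (1+T)^{p^n}-1$. A classical argument of Iwasawa produces natural homomorphisms between $X/\omega_n X$ and $A_n$ whose kernels and cokernels are finite of orders bounded independently of $n$. Up to this uniformly bounded error, it suffices to analyze $\varinjlim_n X/\omega_n X$, and via the pseudo-isomorphism $X \sim E$ one may further replace $X$ by $E$ at the cost of a similar bounded error.

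The second step is to compute the direct limit summand by summand on the model $E$. For a summand $\Lambda/(p^{m})$, the quotient $\Lambda/(p^{m},\omega_n) \simeq (\Z/p^{m})[T]/(\omega_n)$ is a free $\Z/p^{m}$-module of rank $p^n$, the transition maps induced by the inclusions $\omega_n \mid \omega_{n+1}$ are injective, and the direct limit is an infinite $\Z_p$-module of exponent exactly $p^{m}$. For a summand $\Lambda/(f^{e})$ with $f$ distinguished of degree $d_f$, the module is $\Z_p$-free of rank $e d_f$; its $\omega_n$-coinvariants are finite of stable $p$-rank $e d_f$ with orders growing unboundedly, so the direct limit is $\Z_p$-cofree of corank $e d_f$, namely $(\Q_p/\Z_p)^{e d_f}$. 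Assembling these contributions and using that the maximal divisible subgroup splits off as a direct summand of any $\Z_p$-module, one obtains $A_{\mathcal{F}} \simeq (\Q_p/\Z_p)^{\lambda_p(F)} \oplus A'$, where $A'$ collects the contributions of the $\mu$-summands and satisfies $p^N A' = 0$ for $N = \max_i m_i$.

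Assertions (1) and (2) follow immediately: $A'$ vanishes precisely when there are no $\mu$-summands, i.e.\ when $\mu_p(F)=0$, and the $\Z_p$-corank of $A_{\mathcal{F}}$ is $\lambda_p(F)$, so $\lambda_{\mathcal{F}}=\lambda_p(F)$. The principal technical obstacle is to verify that the finite, uniformly bounded kernels and cokernels coming from both the pseudo-isomorphism $X \sim E$ and the comparison maps $X/\omega_n X \to A_n$ do not perturb the final decomposition; one must check that these errors are absorbed in passing to the direct limit and contribute neither to the $\Z_p$-corank nor to the exponent of the bounded part $A'$.
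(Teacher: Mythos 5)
The paper does not actually prove this statement; it is quoted as a theorem of Iwasawa with references to the original papers and to Schettler, so there is no internal proof to compare against. Your sketch is the standard route (compare $A_n$ with $X/\omega_n X$, invoke the structure theorem, compute direct limits on the elementary module $E$), and it is adequate for the decomposition $A_{\mathcal{F}}\simeq \left(\Q_p/\Z_p\right)^{\lambda}\oplus A'$ with $A'$ of bounded exponent, for $\lambda_{\mathcal{F}}=\lambda_p(F)$, and for the implication $A'=0\Rightarrow \mu_p(F)=0$. Minor points you should still make explicit: the comparison maps between $X/\omega_nX$ and $A_n$ require the usual normalization at a level $e$ past which the ramified primes are totally ramified (in general $A_n\simeq X/Y_n$ with $Y_n=\nu_{n,e}Y_e$, and the uniform bound on the discrepancy comes from the fixed finite group $Y_e/\omega_eX$); the transition maps on the quotients are multiplication by $\nu_{n+1,n}=\omega_{n+1}/\omega_n$, and their compatibility with the ideal-extension maps on $A_n$ must be checked; and finiteness of $E/\omega_nE$ uses that no $f_j$ divides any $\omega_n$, which follows from the finiteness of the $A_n$.

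The genuine gap is in assertion (1), the direction $\mu_p(F)=0\Rightarrow A'=0$. Your bookkeeping only controls the errors ``up to finite groups of bounded order,'' and that can never yield the exact vanishing of $A'$: a priori the finite kernels and cokernels coming from the pseudo-isomorphism $X\sim E$ and from the comparison with $A_n$ could survive as a nonzero finite (hence bounded-exponent) summand of the limit, so your argument as stated only gives $A'$ finite when $\mu_p(F)=0$. You name this as the principal technical obstacle but do not supply the mechanism that resolves it. The missing idea is that the transition maps kill every \emph{fixed} finite module in the limit: if $\Phi$ is finite with continuous $\Gamma$-action, then $\gamma^{p^n}$ acts trivially on $\Phi$ for $n\gg 0$, so $\nu_{n+1,n}=\sum_{i=0}^{p-1}\gamma^{ip^n}$ acts on $\Phi$ as multiplication by $p$; since direct limits are exact, the bounded error terms (which form directed systems over fixed finite modules) therefore vanish in $\varinjlim$, while the genuinely growing systems $\Lambda/(p^{m},\omega_n)$ and $\left(\Lambda/(f^e)\right)/\omega_n$ are unaffected. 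Equivalently, one must prove that $A_{\mathcal{F}}$ is $p$-divisible when $\mu_p(F)=0$ (Iwasawa's own route, via the adjoint/dual of $A_{\mathcal{F}}$ having no nonzero finite submodule). Without some such input, the step ``the errors are absorbed in passing to the direct limit'' is an assertion, not a proof, and it is exactly where the content of part (1) lies.
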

\begin{proof}
The reader is referred to \cite[p.272]{iwasawa1981riemann} or \cite[Theorem 1.1]{schettler2014generalizations} for the statement of the above result. The result was originally proven in \cite{iwasawa1959gamma, iwasawa1973zl}.
\end{proof}

\begin{Th}[Iwasawa]\label{iwasawa thm major}
Let $p$ be an odd prime and $L/K$ a Galois extension of number fields such that $\op{Gal}(L/K)$ is isomorphic to $\Z/p\Z$. Assume that $\mu_p(K)=0$. Then, the following assertions hold:
\begin{enumerate}
    \item $\mu_p(L)=0$,
    \item $h^i:=h^i(L^{\op{cyc}}/K^{\op{cyc}})$ are finite for $i=1,2$,
    \item $\lambda_p(L)=p\lambda_p(K)+\sum_{w\nmid p} (e(w)-1)+(p-1)\left(h^2-h^1\right)$.
\end{enumerate}
Here $w$ runs over all primes of $L^{\op{cyc}}$ that do not lie above $p$ and $e(w)$ refers to the ramification index over $K^{\op{cyc}}$.
\end{Th}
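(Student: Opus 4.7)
The plan is to follow Iwasawa's original strategy from \cite{iwasawa1981riemann}, which combines three ingredients: propagation of $\mu=0$ in finite $p$-extensions, the capitulation interpretation of the cohomology groups $H^i(\mathcal{L}/\mathcal{K}, \mathcal{O}_{\mathcal{L}}^\times)$ supplied by Proposition \ref{prop 3.3}, and Chevalley's ambiguous class number formula applied at each finite layer of the cyclotomic tower and then passed to the limit. I write $\mathcal{K}=K^{\op{cyc}}$, $\mathcal{L}=L^{\op{cyc}}$, and $G=\op{Gal}(\mathcal{L}/\mathcal{K})\simeq \Z/p\Z$. The degenerate case $L\subset K^{\op{cyc}}$ is excluded as trivial.

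For (1), the hypothesis $\mu_p(K)=0$ says that $X(\mathcal{K})$ is finitely generated over $\Z_p$. Using the norm map $X(\mathcal{L})\to X(\mathcal{K})$, whose kernel and cokernel are controlled by the finitely many primes of $\mathcal{K}$ that ramify in $\mathcal{L}/\mathcal{K}$ together with capitulation, one sees that $X(\mathcal{L})_G$ is finitely generated over $\Z_p$. A Nakayama argument exploiting the cyclic $p$-group action of $G$ upgrades this to $X(\mathcal{L})$ itself being finitely generated over $\Z_p$, giving $\mu_p(L)=0$. With (1) established, Theorem \ref{iwasawa thm minor} ensures $A_{\mathcal{K}}$ and $A_{\mathcal{L}}$ are $p$-divisible $\Z_p$-modules of coranks $\lambda_p(K)$ and $\lambda_p(L)$, respectively. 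For (2), Proposition \ref{prop 3.3} identifies $h^1$ with $\log_p|P_{\mathcal{L}}^G/P_{\mathcal{K}}|$ and $h^2$ with $\log_p|\op{Cl}_{\mathcal{L}}^G/I_{\mathcal{L}}^G|$; I would establish finiteness of both by reducing to the cokernel of the natural map $A_{\mathcal{K}}\to A_{\mathcal{L}}^G$ and using that the divisible parts of $A_{\mathcal{K}}$ and $A_{\mathcal{L}}^G$ are already in the image modulo a finite capitulation kernel, so only a finite discrepancy survives.

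For (3), I would apply Chevalley's genus formula to the degree-$p$ cyclic extension $l_n/k_n$ of the $n$-th cyclotomic layers of $K$ and $L$:
\[|\op{Cl}_{l_n}^{\op{Gal}(l_n/k_n)}|_p = |\op{Cl}_{k_n}|_p \cdot \frac{\prod_{v}e_v(l_n/k_n)}{p\cdot[E_{k_n}:E_{k_n}\cap N_{l_n/k_n}l_n^\times]_p}.\]
Taking $p$-adic valuations and passing to the limit $n\to\infty$, using that the set of primes of $\mathcal{K}$ ramifying in $\mathcal{L}/\mathcal{K}$ stabilizes, produces an identity for $\op{corank}_{\Z_p}(A_{\mathcal{L}}^G)$ in terms of $\lambda_p(K)$, the ramification contribution $\sum_{w\nmid p}(e(w)-1)$, and the asymptotic behavior of the unit index; the snake lemma applied to $0\to \mathcal{O}_{\mathcal{L}}^\times\to \mathcal{L}^\times\to P_{\mathcal{L}}\to 0$ combined with Lemma \ref{lemma 3.2} converts this unit index into the cohomological difference $h^2-h^1$. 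The final step recovers $\lambda_p(L)$ from $\op{corank}_{\Z_p}(A_{\mathcal{L}}^G)$ via the $\Z_p[G]$-module structure of $A_{\mathcal{L}}$: using the decomposition $\Q_p[G]\simeq \Q_p\oplus \Q_p(\zeta_p)$ to write $\lambda_p(L)=a+(p-1)b$, where $a$ is the trivial-isotypic multiplicity, combined with the Herbrand quotient relation for $G$ acting on $A_{\mathcal{L}}$.

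The hardest step will be showing that the finite-level unit indices stabilize in a controlled way so that their limit is precisely $(p-1)(h^2-h^1)$ and extracting the coefficient $p$ in the leading term $p\lambda_p(K)$. This Riemann--Hurwitz-type scaling reflects that both the trivial and the non-trivial $G$-isotypic parts of $A_{\mathcal{L}}$ contribute to $\lambda_p(L)$, not merely the part visible from $A_{\mathcal{K}}$, and the careful tracking of Tate cohomology through the cyclotomic tower is the main technical burden of the argument.
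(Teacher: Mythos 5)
The paper does not actually prove this theorem: its ``proof'' consists of citations, deferring assertion (1) to Neukirch--Schmidt--Wingberg (Theorem 11.3.8) and assertions (2)--(3) to the main result of Iwasawa's 1981 paper on the Riemann--Hurwitz formula. Your proposal, by contrast, reconstructs the argument, and the route you choose is the standard genus-theoretic one: topological Nakayama over the cyclic $p$-group $G$ for the propagation of $\mu=0$; Chevalley's ambiguous class number formula at the finite layers $l_n/k_n$ followed by a passage to the limit; and the $\Q_p[G]\simeq \Q_p\oplus\Q_p(\zeta_p)$ isotypic decomposition together with the Herbrand quotient of $A_{\mathcal{L}}$ to convert $\op{corank}_{\Z_p}(A_{\mathcal{L}}^G)$ into $\lambda_p(L)$. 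This is faithful to Iwasawa's own strategy (his argument is phrased via the cohomology of $I_{\mathcal{L}}$, $P_{\mathcal{L}}$, $\mathcal{O}_{\mathcal{L}}^\times$ and $\mathcal{L}^\times$ exactly as in Lemmas \ref{lemma 3.1} and \ref{lemma 3.2} of the paper, which is where the term $(p-1)(h^2-h^1)$ comes from), so I would call your route a correct expansion of what the paper outsources rather than a genuinely different proof. What your version buys is that the appearance of each term in the formula is explained: $p\lambda_p(K)$ from the regular-representation part of $A_{\mathcal{L}}\otimes\Q_p$, $\sum_{w\nmid p}(e(w)-1)$ from the ramification factor in Chevalley's formula, and $(p-1)(h^2-h^1)$ from the unit index via the snake lemma and the vanishing of $H^n(G,\mathcal{L}^\times)$.

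Two points in your sketch need more care than you give them. First, in (2) you argue that the map $A_{\mathcal{K}}\to A_{\mathcal{L}}^G$ has finite kernel and cokernel by appealing to divisibility, but finiteness of the capitulation kernel and cokernel is essentially equivalent to finiteness of $H^1(G,\mathcal{O}_{\mathcal{L}}^\times)$ and $H^2(G,\mathcal{O}_{\mathcal{L}}^\times)$, so as written the step is close to circular; the honest order of argument is to first deduce from $\mu_p(L)=0$ that $A_{\mathcal{L}}\simeq(\Q_p/\Z_p)^{\lambda_p(L)}$ is cofinitely generated, conclude that all Tate cohomology groups of $G$ acting on $A_{\mathcal{L}}$, $I_{\mathcal{L}}$, $P_{\mathcal{L}}$ are finite (they are $p$-torsion subquotients of cofinitely generated modules), and only then read off finiteness of the $h^i$ from the exact sequences. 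Second, the Herbrand quotient identity $\lambda_p(L)=p\,\op{corank}(A_{\mathcal{L}}^G)+(p-1)\log_p q(A_{\mathcal{L}})$ that closes your argument requires the $H^i(G,A_{\mathcal{L}})$ to be finite and requires relating $q(A_{\mathcal{L}})$ to $q(\mathcal{O}_{\mathcal{L}}^\times)$ and the local ramification terms; this bookkeeping is exactly the content of Iwasawa's Lemmas 5--7 and is the part you correctly identify as the main technical burden, but it is not optional detail --- without it the coefficient $p$ and the sign of $(p-1)(h^2-h^1)$ cannot be pinned down.
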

\begin{proof}
The conclusion that $\mu_p(L)=0$ follows from \cite[Theorem 11.3.8]{neukirch2013cohomology}. The remaining assertions follow from the main result of \cite{iwasawa1981riemann}. The finiteness of $h^i$ for $i=1,2$ follows as a direct consequence of the arguments in \emph{loc. cit.}
\end{proof}

\section{Growth of Iwasawa invariants in pro-$p$ extensions}\label{s 4}
\par In this section, we prove the main result of this paper. The main ingredients are the Theorems \ref{iwasawa thm minor} and \ref{iwasawa thm major}. 

\subsection{Main results}
First, we introduce some further notation. Let $F$ be a number field and $F_\infty$ be a uniform pro-$p$ extension of $F$ satisfying the conditions (C1) and (C2) of section \ref{s 2.1}. Recall that the descending central series on $G$ induces the filtration $\{F_n\}$ on $F_\infty$, and that $\mu_p(F_n)$ and $\lambda_p(F_n)$ are the $\mu$ and $\lambda$-invariants of $F_n^{\op{cyc}}/F_n$.
\par We consider the tower of $\Z_p$-fields \[F^{\op{cyc}}=F_0^{\op{cyc}}\subset F_1^{\op{cyc}}\subset\dots F_n^{\op{cyc}}\subset F_{n+1}^{\op{cyc}}\subset \dots,\]
and note that for $n\geq 0$ \[\op{Gal}\left(F_{n+1}^{\op{cyc}}/F_n^{\op{cyc}}\right)=H_n/H_{n+1}\simeq \left(\Z/p\Z\right)^{(d-1)}.\]
For $n\geq 0$, let $\mathcal{F}_n$ to be the $\Z_p$-field $F_n^{\op{cyc}}$. Choose a filtration $\{\mathcal{F}_n^{(j)}\mid j=0,\dots, (d-1)\}$ of $\mathcal{F}_{n+1}/\mathcal{F}_n$ for which
\begin{itemize}
    \item $\mathcal{F}_n^{(j)}\subset \mathcal{F}_n^{(j+1)}$ is a Galois extension with $[\mathcal{F}_n^{(j+1)}:\mathcal{F}_n^{(j)}]=p$,
    \item $\mathcal{F}_n^{(0)}=\mathcal{F}_n$ and $\mathcal{F}_n^{(d-1)}=\mathcal{F}_{n+1}$.
\end{itemize}
Note that it follows from Theorem \ref{iwasawa thm major} that $h^i(\mathcal{F}_n^{(j+1)}/\mathcal{F}_n^{(j)})$ is well defined for $i=1,2$. We set \[B_n:=\sum_{j=0}^{d-2}p^{d-2-j}\left(h^2(\mathcal{F}_n^{(j+1)}/\mathcal{F}_n^{(j)})-h^1(\mathcal{F}_n^{(j+1)}/\mathcal{F}_n^{(j)})\right),\]
and set 
\begin{equation}\label{def of C_n}
    C_n:=(p-1)\sum_{i=0}^{n-1} p^{(d-1)(n-1-i)} B_i,
\end{equation}
and it is understood that $C_0=0$.

\begin{lemma}\label{lemma bounds on C_n}
With respect to notation above, letting
\begin{equation}\label{zetan plus minus definition}\begin{split}&\xi_n^+:= \op{max}\left\{h^2(\mathcal{F}_i^{(j+1)}/\mathcal{F}_i^{(j)})-h^1(\mathcal{F}_n^{(j+1)}/\mathcal{F}_n^{(j)})\mid i\leq n\text{ and }j\leq d-2\right\},\\
&\xi_n^-:= \op{min}\left\{h^2(\mathcal{F}_i^{(j+1)}/\mathcal{F}_i^{(j)})-h^1(\mathcal{F}_n^{(j+1)}/\mathcal{F}_n^{(j)})\mid i\leq n\text{ and }j\leq d-2\right\},\\\end{split}\end{equation}
we find that for $n\geq 0$, $C_n$ satisfies the bounds
\[\left(p^{n(d-1)}-1\right)\xi_n^-\leq C_n\leq \left(p^{n(d-1)}-1\right)\xi_n^+.\]
\end{lemma}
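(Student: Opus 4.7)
The plan is to reduce the double sum defining $C_n$ to a product of two geometric sums by means of termwise bounds on $B_i$, and then evaluate. No capitulation theory enters the argument; the lemma is a purely combinatorial bookkeeping statement once the quantities $B_i$, $C_n$, and $\xi_n^\pm$ are in place.

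First I would note that by the very definition of $\xi_n^\pm$, for every $0 \leq i \leq n-1$ and every $0 \leq j \leq d-2$ one has the termwise inequality
\[
\xi_n^- \;\leq\; h^2(\mathcal{F}_i^{(j+1)}/\mathcal{F}_i^{(j)}) - h^1(\mathcal{F}_i^{(j+1)}/\mathcal{F}_i^{(j)}) \;\leq\; \xi_n^+.
\]
Multiplying by the non-negative weight $p^{d-2-j}$ and summing over $j = 0, \dots, d-2$, I obtain
\[
\xi_n^- \cdot \sum_{j=0}^{d-2} p^{d-2-j} \;\leq\; B_i \;\leq\; \xi_n^+ \cdot \sum_{j=0}^{d-2} p^{d-2-j}.
\]
The inner sum is a geometric series: $\sum_{j=0}^{d-2} p^{d-2-j} = 1 + p + \dots + p^{d-2} = (p^{d-1}-1)/(p-1)$.

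Next I would substitute these bounds into the definition \eqref{def of C_n} of $C_n$. Since each weight $(p-1)p^{(d-1)(n-1-i)}$ is positive, the inequalities propagate through the outer sum, giving
\[
\xi_n^- \,(p^{d-1}-1) \sum_{i=0}^{n-1} p^{(d-1)(n-1-i)} \;\leq\; C_n \;\leq\; \xi_n^+ \,(p^{d-1}-1) \sum_{i=0}^{n-1} p^{(d-1)(n-1-i)}.
\]
Reindexing by $k = n-1-i$, the remaining sum is $\sum_{k=0}^{n-1} p^{(d-1)k} = (p^{n(d-1)}-1)/(p^{d-1}-1)$. The factor $(p^{d-1}-1)$ cancels, yielding exactly the claimed estimate $(p^{n(d-1)}-1)\xi_n^- \leq C_n \leq (p^{n(d-1)}-1)\xi_n^+$.

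The only subtlety to keep in mind is that $\xi_n^-$ need not be non-negative, since it is a minimum of differences of cohomological quantities. However, because all the weights $p^{d-2-j}$ and $(p-1)p^{(d-1)(n-1-i)}$ are positive, multiplying the termwise inequalities by them does not reverse the direction of the inequality, so the estimate is valid regardless of the sign of $\xi_n^-$. There is no genuine obstacle in this lemma; the bounds are simply the result of bounding each $B_i$ by its extreme allowed value and summing the resulting geometric progression.
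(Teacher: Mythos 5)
Your proof is correct and follows essentially the same route as the paper's: bound each summand of $B_i$ by $\xi_n^{\pm}$, evaluate the resulting geometric sums, and observe that the factors $(p-1)$ and $(p^{d-1}-1)$ cancel to give $(p^{n(d-1)}-1)\xi_n^{\pm}$. Your explicit remark that the positivity of the weights makes the argument valid even when $\xi_n^-$ is negative is a worthwhile clarification, but the substance of the argument is identical.
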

\begin{proof}
We prove that $C_n\leq \left(p^{n(d-1)}-1\right)\xi_n^+$. The proof of the lower bound is similar. For $i\leq n$, we find that
\[B_i\leq \sum_{j=0}^{d-2}p^{d-2-j}\xi_i^+=\frac{p^{d-1}-1}{p-1} \xi_i^+\leq \frac{p^{d-1}-1}{p-1} \xi_n^+.\]
Plugging this inequality into \eqref{def of C_n}, we obtain the bound
\[\begin{split}C_n &= (p-1)\sum_{i=0}^{n-1} p^{(d-1)(n-1-i)} B_i\\
&\leq \xi_n^+\sum_{i=0}^{n-1} \left(p^{(d-1)(n-i)}-p^{(d-1)(n-1-i)}\right) \\
&= \xi_n^+ \left(p^{n(d-1)}-1\right),\\
\end{split}\]which completes the proof.
\end{proof}
\par Consider the group $A_n:=A\left(F_n^{\op{cyc}}\right)$. According to Theorem \ref{iwasawa thm minor}, $A_n=\left(\Q_p/\Z_p\right)^{\lambda_n}\oplus A_n'$, where $\lambda_n=\lambda(F_n)$ and $A_n'$ has bounded exponent. Moreover, $A_n'=0$ if and only if $\mu_p(F_n)=0$. 

\begin{Th}\label{main thm}
Let $F$ be a number field and $p$ be an odd prime number. Let $F_\infty$ be a uniform pro-$p$ extension of $F$ satisfying conditions (C1) and (C2) of section \ref{s 2.1}. The following assertions hold:
\begin{enumerate}
    \item $\mu_p(F_n)=0$ for all $n\geq 0$, 
    \item $A_n=\left(\Q_p/\Z_p\right)^{\lambda_p(F_n)}$ for all $n\geq 0$,
    \item for $n\geq 0$, $\lambda(F_n)$ satisfies the bounds \begin{equation}\label{main eqns}\begin{split}& \lambda_p(F_n)\geq p^{n(d-1)}\left(\lambda_p(F)+\xi_n^-\right)-\xi_n^-,\\
    & \lambda_p(F_n)\leq p^{n(d-1)}\left(\lambda_p(F)+\#S(F^{\op{cyc}})+\xi_n^+\right)-\xi_n^+,\end{split}\end{equation}
\end{enumerate}
where $\xi_n^+$ and $\xi_n^-$ are the quantities specified by \eqref{zetan plus minus definition}.
\end{Th}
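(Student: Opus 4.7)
The plan is to handle (1) and (2) by an induction up the entire tower using the chosen refinement $\{\mathcal{F}_n^{(j)}\}$ into successive $\Z/p\Z$-extensions, and then to extract the bounds in (3) by iterating the Riemann--Hurwitz-type formula of Theorem \ref{iwasawa thm major}(3). Assuming $\mu_p(F)=0$ at the base (as recorded in the abstract), part (1) of Theorem \ref{iwasawa thm major} propagates the vanishing of $\mu$ through any $\Z/p\Z$-extension of $\Z_p$-fields; applying this step by step through $\mathcal{F}_n=\mathcal{F}_n^{(0)}\subset \cdots \subset \mathcal{F}_n^{(d-1)}=\mathcal{F}_{n+1}$ and then across all $n$ gives $\mu_p(F_n)=0$ for every $n\geq 0$. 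Part (2) is then immediate from Theorem \ref{iwasawa thm minor}: once $\mu_p(F_n)=0$, the bounded-exponent summand $A'$ vanishes, leaving $A_n\simeq(\Q_p/\Z_p)^{\lambda_p(F_n)}$.

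For (3), applying Theorem \ref{iwasawa thm major}(3) at each $\Z/p\Z$-step $\mathcal{F}_n^{(j)}\subset \mathcal{F}_n^{(j+1)}$ produces
\[
\lambda_p(\mathcal{F}_n^{(j+1)})=p\,\lambda_p(\mathcal{F}_n^{(j)})+(p-1)r_{n,j}+(p-1)\bigl(h^2(\mathcal{F}_n^{(j+1)}/\mathcal{F}_n^{(j)})-h^1(\mathcal{F}_n^{(j+1)}/\mathcal{F}_n^{(j)})\bigr),
\]
where $r_{n,j}$ counts the primes of $\mathcal{F}_n^{(j)}$ (necessarily lying above $S(F^{\op{cyc}})$) that ramify in $\mathcal{F}_n^{(j+1)}$. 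Iterating over $j=0,\ldots,d-2$ collapses a single level into $\lambda_p(F_{n+1})=p^{d-1}\lambda_p(F_n)+\rho_n+(p-1)B_n$ with $\rho_n:=(p-1)\sum_{j=0}^{d-2} p^{d-2-j}r_{n,j}$, and iterating across $n$ yields the closed form
\[
\lambda_p(F_n)=p^{n(d-1)}\lambda_p(F)+T_n+C_n,\qquad T_n:=\sum_{i=0}^{n-1}p^{(d-1)(n-1-i)}\rho_i,
\]
with $C_n$ as in \eqref{def of C_n}.

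The lower bound in \eqref{main eqns} is now immediate from $T_n\geq 0$ together with $C_n\geq (p^{n(d-1)}-1)\xi_n^-$ from Lemma \ref{lemma bounds on C_n}. The substance of the argument is the upper bound on $T_n$. Let $U_{i,j}$ denote the number of primes of $\mathcal{F}_i^{(j)}$ lying above $S(F^{\op{cyc}})$. In any cyclic $p$-extension, a ramified prime of the base lifts to a single prime upstairs, whereas an unramified prime yields at most $p$ primes, so
\[
U_{i,j+1}\leq r_{i,j}+p\,(U_{i,j}-r_{i,j}),
\]
which rearranges to $(p-1)r_{i,j}\leq p\,U_{i,j}-U_{i,j+1}$. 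Substituting this inequality into the definition of $T_n$, the inner sum over $j$ (with weights $p^{d-2-j}$) telescopes to $p^{d-1}U_{i,0}-U_{i,d-1}=p^{d-1}U_i-U_{i+1}$, and the subsequent sum over $i$ telescopes once more to $T_n\leq p^{n(d-1)}U_0-U_n\leq p^{n(d-1)}\#S(F^{\op{cyc}})$. Combining this with $C_n\leq (p^{n(d-1)}-1)\xi_n^+$ from Lemma \ref{lemma bounds on C_n} yields the desired upper bound in \eqref{main eqns}.

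The principal difficulty is producing the sharp coefficient $\#S(F^{\op{cyc}})$ on the ramification contribution: a crude bound of the form $r_{i,j}\leq U_{i,j}\leq \#S(F^{\op{cyc}})\,p^{i(d-1)+j}$ inserted directly introduces an extra multiplicative factor of order $n(d-1)$, which would degrade the estimate. The twofold telescoping above exploits the rigid ramification/splitting dichotomy in a $\Z/p\Z$-extension to compress the entire nested sum into a single comparison between the number of primes above $S(F^{\op{cyc}})$ at the top and the bottom of the tower, and it is exactly this observation that makes the theorem's upper bound come out with the expected shape.
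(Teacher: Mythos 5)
Your proposal is correct and follows the same route as the paper: induction through the refinement of each level into successive $\Z/p\Z$-extensions using Theorems \ref{iwasawa thm major} and \ref{iwasawa thm minor}, followed by iteration of the Riemann--Hurwitz relation and an appeal to Lemma \ref{lemma bounds on C_n}. In fact your double-telescoping estimate $T_n\leq p^{n(d-1)}\#S(F^{\op{cyc}})$ for the accumulated ramification contribution supplies precisely the step the paper compresses into ``recursively applying the above,'' and you rightly observe that the hypothesis $\mu_p(F)=0$ (stated in the abstract but omitted from the theorem's statement) is needed to start the induction.
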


\begin{proof}
\par It follows from Theorem \ref{iwasawa thm major} and by induction on $n$ that $\mu(F_n)=0$. It is thus a consequence of Theorem \ref{iwasawa thm minor} that $A_n=\left(\Q_p/\Z_p\right)^{\lambda_p(F_n)}$ for all $n\geq 0$. Recall that for $n\geq 0$, the quantity $C_n$ is defined by \eqref{def of C_n}. According to Lemma \ref{lemma bounds on C_n}, we find that for $n\geq 0$,
\[\left(p^{n(d-1)}-1\right)\xi_n^-\leq C_n\leq \left(p^{n(d-1)}-1\right)\xi_n^+.\]Therefore in order to prove the bounds on $\lambda_p(F_n)$, it suffices to prove that
\begin{equation}\label{bounds for lambda in proof}p^{n(d-1)}\lambda_p(F)+C_n\leq \lambda_p(F_n)\leq p^{n(d-1)}\left(\lambda_p(F)+\#S(F^{\op{cyc}})\right)+C_n.\end{equation} Recall that for every value of $k$, we have chosen a filtration 
\[F_k^{\op{cyc}}=\mathcal{F}_k^{(0)}\subset \dots \subset \mathcal{F}_k^{(j)}\subset \mathcal{F}_k^{(j+1)}\subset\dots\subset \mathcal{F}_k^{(d-1)}=F_{n+1}^{\op{cyc}},\]
such that the successive extensions are Galois with Galois group isomorphic to $\Z/p\Z$. Concatenate the above filtrations to obtain a filtration
\[F^{\op{cyc}}=\mathcal{E}_0\subset \mathcal{E}_1\subset \dots \subset \mathcal{E}_i\subset \mathcal{E}_{i+1}\subset \dots \subset \mathcal{E}_{n(d-1)}=F_n^{\op{cyc}}.\]
Let $\lambda_i$ denote the $\lambda$-invariant of $\mathcal{E}_i$,
where $\mathcal{E}_{k(d-1)+j}:=\mathcal{F}_k^{(j)}$ for $0\leq j\leq d-1$. By induction, the $\mu$-invariant of each of the $\Z_p$-fields above is $0$. The relation 
\[\lambda_{i+1}=p\lambda_i+\sum_{w\nmid p} (e(w)-1)+(p-1)\left(h^2(\mathcal{E}_{i+1}/\mathcal{E}_i)-h^1(\mathcal{E}_{i+1}/\mathcal{E}_i)\right)\] follows from Theorem \ref{iwasawa thm major}. Recursively applying the above, the obtain the bounds \eqref{bounds for lambda in proof}.
\end{proof}

\begin{Definition}
We say that the successive obstructions to capitulation are uniformly bounded if there exists $B>0$ independent of $n$ such that for all $i\geq 0$\[\left|h^2(\mathcal{F}_{i+1}/\mathcal{F}_i)-h^1(\mathcal{F}_{i+1}/\mathcal{F}_i)\right|\leq B.\]
We say that the successive obstructions to capitulation eventually vanish if there exists $n_0$ such that for all $i\geq n_0$, 
\[h^2(\mathcal{F}_{i+1}/\mathcal{F}_i)=h^1(\mathcal{F}_{i+1}/\mathcal{F}_i)=0.\]
\end{Definition}
\begin{Remark}\label{last remark}
The above result shows that if there is a uniform bound $B>0$ independent of $n$ such that for all $i\geq 0$\[\left|h^2(\mathcal{F}_{i+1}/\mathcal{F}_i)-h^1(\mathcal{F}_{i+1}/\mathcal{F}_i)\right|\leq B,\]then, it follows that \[p^{n(d-1)}(\lambda_p(F)-B)<\lambda_p(F_n)< p^{n(d-1)}(\lambda_p(F)+\# S(F^{\op{cyc}})+B).\] At this point in time, we are not able to show the uniform boundedness of successive obstruction quotients in our examples since it is a difficult condition to computationally verify given that it concerns the structure of class groups over infinite Galois extensions. However, it is a question worth exploring further. We expect the formulas for the growth of the the $\Z_p$ corank of $A(F_n)$ should be asymptotic as a function of $n$ to $\lambda(G)p^{(d-1)n}$, where $\lambda(G)$ is a non-negative constant. More precisely, we expect that as a function of $n$, the quotient $A(F_n)/p^{(d-1)n}$ converges to a constant $\lambda(G)$. The quantity $\lambda(G)$ should be thought of as noncommutative analog of the classical $\lambda$-invariant. In light of the bounds for $\lambda_p(F_n)$ of Theorem \ref{main thm}, it is reasonable to expect that the successive obstruction to capitulation $\xi_n^+$ and $\xi_n^-$ introduced in this article, are bounded independent of $n$.
\end{Remark}
\subsection{An explicit example}\label{s 4.2}
\par We illustrate Theorem \ref{main thm} through an example. Let $p$ be an odd prime and let $\mu_p$ denote the $p$-th roots of unity and $F:=\Q(\mu_p)$. Let $\ell\neq p$ be a prime and set $\mu_{p^\infty}:=\cup_{n\geq 1} \mu_{p^n}$ and $\ell^{p^{-\infty}}:=\cup_{n\geq 1} \ell^{p^{-n}}$. Let $F_\infty$ be the \emph{false Tate extension} $\Q\left(\mu_{p^\infty}, \ell^{p^{-\infty}}\right)$. Observe that $F_\infty$ is a pro-$p$ extension of $F$ with $F_n=\Q\left(\mu_{p^{n+1}}, \ell^{p^{-n}}\right)$. The set $S$ is the set of primes $v$ of $F$ that lie above $\ell$. Note that since $F_\infty$ contains $F^{\op{cyc}}=\Q(\mu_{p^\infty})$, the condition (C1) of \eqref{s 2.1} is satisfied. The only primes that ramify in $F_\infty$ are the primes of $F$ above $\{p, \ell\}$. The condition (C2) is also satisfied. The dimension $d$ of $G$ is equal to $2$. 
\par Since $F$ is an abelian extension of $\Q$, it follows from the result of Ferrero and Washington \cite{ferrero1979iwasawa} that $\mu_p(F)=0$. According to Theorem \ref{main thm}, we find that
\begin{enumerate}
    \item $\mu_p(F_n)=0$ for all $n$, 
    \item the $p$-primary class group $A(F_n^{\op{cyc}})$ is isomorphic to $\left(\Q_p/\Z_p\right)^{\lambda_p(F_n)}$,
    \item for $n\geq 0$, $\lambda(F_n)$ satisfies the bounds \begin{equation}\begin{split}& \lambda_p(F_n)\geq p^{n}\left(\lambda_p(F)+\xi_n^-\right)-\xi_n^-,\\
    & \lambda_p(F_n)\leq p^{n}\left(\lambda_p(F)+\#S(F^{\op{cyc}})+\xi_n^+\right)-\xi_n^+,\end{split}\end{equation}
\end{enumerate}
Specialize the above example to $p=3$ and $\ell=5$. Since $5^2\equiv 1\mod{3}$ and $5^2\not \equiv 1\mod{3^2}$, we find that $\ell$ splits into $2$ primes $\mathfrak{l}$ and $\mathfrak{l}^*$ in $F$, and the primes $\mathfrak{l}$ and $\mathfrak{l}^*$ are both inert in $F^{\op{cyc}}$. Therefore, $\#S(F^{\op{cyc}})=2$, and the upper bound specializes to 
\[\lambda_3(F_n)\leq 3^{n}\left(\lambda_3(F)+2+\xi_n^+\right)-\xi_n^+.\]
\subsection{Torsion fields generated by Elliptic curves}
\par A large class of examples arise from infinite torsion fields generated by elliptic curves. Let $E$ be an elliptic curve defined over $\Q$ without complex multiplication and let $p$ be an odd prime. Let $E[p^n]$ be the group of $p^n$ torsion points of $E$ and consider the action of the absolute Galois group $\op{G}_{\Q}:=\op{Gal}(\bar{\Q}/\Q)$ on $E[p^n]$. Since $E[p^n]$ is isomorphic to $\left(\Z/p^n\Z\right)^2$, we obtain a Galois representation 
\[\varrho_{E, p^n}:\op{G}_{\Q}\rightarrow \op{GL}_2(\Z/p^n\Z).\] The 
$p$-adic Tate module $T_p(E)$ is the inverse limit $\varprojlim_n E[p^n]$ with respect to multiplication by $p$ maps $E[p^{n+1}]\rightarrow E[p^n]$. The Galois representation on $T_p(E)$ is the inverse limit of the Galois representations $\varrho_{E, p^n}$, and is expressed as follows
\[\rho_{E,p}:\op{G}_{\Q}\rightarrow \op{GL}_2(\Z_p).\] Let $F_\infty$ be the fixed field of the kernel of $\rho_{E,p}$. We identify $\op{Gal}(F_\infty/\Q)$ with the image of $\rho_{E,p}$ in $\op{GL}_2(\Z_p)$. Let $F_n$ be the number field which is fixed by the kernel of $\varrho_{E,p^{n+1}}$ and set $F:=F_0$. Set $\Gamma(p)$ to be the kernel of the reduction modulo-$p$ map
\[\op{GL}_2(\Z_p)
\rightarrow \op{GL}_2(\Z/p\Z).\] Then, $G:=\op{Gal}(F_{\infty}/F)$ may be identified with the intersection of $\op{im}\rho_{E,p}$ with $\Gamma(p)$, and thus, is a pro-$p$ group.
\par It follows from Serre's Open image theorem (see \cite{serre1972proprietes}) that the image of $\rho_{E,p}$ must contain $\Gamma(p)$ for all but finitely many primes $p$. Suppose that $p$ is a prime such that $\rho_{E,p}$ contains $\Gamma(p)$. Then, $G$ is isomorphic to $\Gamma(p)$, and it is easy to see that $G_n$ is isomorphic to 
\[\Gamma(p^{n+1}):=\op{Ker}\left(\op{GL}_2(\Z_p)
\rightarrow \op{GL}_2(\Z/p^{n+1}\Z)\right).\] Note that since $G$ is a finite index subgroup of $\op{GL}_2(\Z_p)$, its dimension $d$ is equal to $4$. Furthermore, the set of primes $S$ is the set of primes $v$ of $F$, such that $v|\ell$ for a prime $\ell\neq p$ at which $E$ has bad reduction. Clearly this set $S$ is finite. Furthermore, since the determinant of $\rho_{E,p}$ is the cyclotomic character, it follows that $F_\infty$ contains $F^{\op{cyc}}=\Q(\mu_{p^\infty})$. Therefore, the assumptions of Theorem \ref{main thm} are satisfied provided $\mu_p(F)=0$. This is expected to be true. Assuming that $\mu_p(F)=0$, the following bounds for $\lambda_p(F_n)$ hold
\begin{equation}\begin{split}& \lambda_p(F_n)\geq p^{3n}\left(\lambda_p(F)+\xi_n^-\right)-\xi_n^-,\\
    & \lambda_p(F_n)\leq p^{3n}\left(\lambda_p(F)+\#S(F^{\op{cyc}})+\xi_n^+\right)-\xi_n^+,\end{split}\end{equation}
\bibliographystyle{abbrv}
\bibliography{references}

\begin{thebibliography}{10}

\bibitem{cuoco1980growth}
A.~A. Cuoco.
\newblock The growth of {I}wasawa invariants in a family.
\newblock {\em Compositio Mathematica}, 41(3):415--437, 1980.

\bibitem{cuoco1981class}
A.~A. Cuoco and P.~Monsky.
\newblock Class numbers in $\mathbb{Z}_p^d$-extensions.
\newblock {\em Mathematische {A}nnalen}, 255(2):235--258, 1981.

\bibitem{DS}
J.~D. Dixon, M.~P. Du~Sautoy, A.~Mann, and D.~Segal.
\newblock {\em Analytic pro-p groups}.
\newblock Number~61. Cambridge University Press, 2003.

\bibitem{ferrero1979iwasawa}
B.~Ferrero and L.~C. Washington.
\newblock The {I}wasawa invariant $\mu_p$ vanishes for abelian number fields.
\newblock {\em Annals of Mathematics}, pages 377--395, 1979.

\bibitem{HL}
P.-C. Hung and M.~F. Lim.
\newblock On the growth of {M}ordell--{W}eil ranks in $ p $-adic lie
  extensions.
\newblock {\em Asian Journal of Mathematics}, 24(4):549--570, 2020.

\bibitem{iwasawa1959gamma}
K.~Iwasawa.
\newblock On {$\Gamma$}-extensions of algebraic number fields.
\newblock {\em Bulletin of the American Mathematical Society}, 65(4):183--226,
  1959.

\bibitem{iwasawa1973zl}
K.~Iwasawa.
\newblock On $\mathbb{Z}_\ell$-extensions of algebraic number fields.
\newblock {\em Annals of Mathematics}, pages 246--326, 1973.

\bibitem{iwasawa1981riemann}
K.~Iwasawa.
\newblock Riemann-{H}urwitz formula and p-adic {G}alois representations for
  number fields.
\newblock {\em Tohoku Mathematical Journal, Second Series}, 33(2):263--288,
  1981.

\bibitem{kida1980extensions}
Y.~Kida.
\newblock $\ell$-extensions of {CM}-fields and cyclotomic invariants.
\newblock {\em Journal of Number Theory}, 12(4):519--528, 1980.

\bibitem{le2005capitulation}
M.~Le~Floc'h, A.~Movahhedi, and T.~N.~Q. Do.
\newblock On capitulation cokernels in {I}wasawa theory.
\newblock {\em American Journal of Mathematics}, 127(4):851--877, 2005.

\bibitem{neukirch2013cohomology}
J.~Neukirch, A.~Schmidt, and K.~Wingberg.
\newblock {\em Cohomology of number fields}, volume 323.
\newblock Springer Science \& Business Media, 2013.

\bibitem{schettler2014generalizations}
J.~Schettler.
\newblock Generalizations of {I}wasawa's {R}iemann--{H}urwitz" formula for
  cyclic $p$-extensions of number fields.
\newblock {\em International Journal of Number Theory}, 10(01):219--233, 2014.

\bibitem{serre1972proprietes}
J.-P. Serre.
\newblock Propri{\'e}t{\'e}s galoisiennes des points d’ordre fini des courbes
  elliptiques.
\newblock {\em Invent. math}, 15:259--331, 1972.

\bibitem{washington1997}
L.~C. Washington.
\newblock {\em Introduction to cyclotomic fields}, volume~83.
\newblock Springer Science \& Business Media, 1997.

\end{thebibliography}

\end{document}